\documentclass[11pt]{amsart}
\usepackage[margin=1in]{geometry}

\usepackage{amssymb}
\usepackage{amsthm}
\usepackage{amsmath}
\usepackage{mathrsfs}
\usepackage{amsbsy}
\usepackage[all]{xy}
\usepackage{bm}
\usepackage{hyperref}
\usepackage{tikz}
\usepackage{array}
\usepackage{float}
\usepackage{enumerate}
\usepackage{xcolor}
\usepackage{hhline}
\usepackage{mathtools}
\allowdisplaybreaks
\usepackage[noadjust]{cite}

\usepackage{caption}
\usepackage{subcaption}
\usepackage{diagbox}
\usepackage{tikz}
\usepackage{bbm}
\usepackage{booktabs}

\usepackage[noabbrev,capitalise]{cleveref}

\newenvironment{enumerate*}%
{\begin{enumerate}[(I)]%
\setlength{\itemsep}{10pt}%
\setlength{\parskip}{0pt}}%
{\end{enumerate}}

\newtheorem{theorem}{Theorem}[section]
\newtheorem{proposition}[theorem]{Proposition}

\newtheorem{conjecture}[theorem]{Conjecture}

\newtheorem{lemma}[theorem]{Lemma}

\theoremstyle{definition}

\newcommand{\bp}{\mathbf{p}}

\newcommand{\bn}{\mathbf{n}}
\newcommand{\bt}{\mathbf{t}}

\DeclareMathOperator{\Span}{span}
\DeclareMathOperator{\IS}{IS}

\title{New bounds for (weak) sequenceability in $\mathbb{Z}_k$}

\author[]{Simone Costa}
\address[Simone Costa]{DICATAM, Universit\`a degli Studi di Brescia, Via Branze~43, I~25123 Brescia, Italy}
\email{simone.costa@unibs.it}
\author[]{Stefano Della Fiore}
\address[Stefano Della Fiore]{DII, Universit\`a degli Studi di Brescia, Via Branze 43, 25123 Brescia, Italy}
\email{stefano.dellafiore@unibs.it}
\keywords{Sequenceability, Rectification, Lov\'asz Local Lemma}
\subjclass{11B75}

\begin{document}

\begin{abstract}
A famous conjecture of Graham asserts that every set $A \subseteq \mathbb{Z}_p \setminus \{0\}$
can be ordered so that all partial sums are distinct.
Although this conjecture was recently proved for sufficiently large primes by Pham and Sauermann in~\cite{PM}, it remains open for general abelian groups, even in the cyclic case $\mathbb{Z}_k$.

For cyclic groups, the best known result is due to Bedert and Kravitz in~\cite{BK}, who proved - using a rectification and a two-step probabilistic approach - that the conjecture holds for any subset $A \subseteq \mathbb{Z}_k \setminus \{0\}$ such that
\[
|A| \le \exp\!\big(c(\log p)^{1/4}\big),
\]
for some constant $c>0$, where $p$ denotes the least prime divisor of $k$.

In this paper, we improve their bound using a rectification argument again, followed by a one-shot probabilistic approach, showing that the conjecture holds whenever
\[
|A| \le \exp\!\big(c(\log p)^{1/3}\big),
\]
thus improving the exponent $1/4$ from~\cite{BK}.

Moreover, the same one-shot approach adapts to the $t$-weak setting:
by imposing all local constraints at once and applying the Lov\'asz Local Lemma, we obtain the existence of a $t$-weak sequencing whenever
\[
t \le \exp\!\big(c(\log p)^{1/4}\big).
\]
\end{abstract}
\maketitle

\section{Introduction}
Let $A$ be a finite subset of an abelian group $(G,+)$. We say that an ordering
$a_1, \ldots, a_{|A|}$ of $A$ is \emph{valid} if its partial sums
$p_1=a_1, p_2=a_1+a_2, \ldots, p_{|A|}=a_1 + \cdots + a_{|A|}$ are pairwise distinct.
Moreover, this ordering is a \emph{sequencing} if it is valid and $p_i \neq 0$
for every $1 \le i \le |A|-1$. In this case, we say that $A$ is sequenceable.
If we relax the definition, we say that an ordering is a \emph{$t$-weak sequencing}
if for every $i \ne j$ with $1 \le |i-j| \le t$, the partial sums $p_i$ and $p_j$
are non-zero and distinct. In this case we say that $A$ is $t$-weak sequenceable.

In the literature, there are several conjectures about valid orderings and sequenceability.
We refer to \cite{CMPP18, OllisSurvey, PD} for an overview of the topic,
\cite{AKP, AL20, ADMS16, CDOR} for lists of related conjectures,
and \cite{BFMPY} for a treatment using rainbow paths.

Here, we explicitly recall Graham's conjecture, which states that every set of nonzero elements of $\mathbb{Z}_p$
has a valid ordering.

\begin{conjecture}[\cite{GR} and \cite{EG}]\label{conj:main}
Let $p$ be a prime. Then every subset $A \subseteq \mathbb{Z}_p \setminus\{0\}$ has a valid ordering.
\end{conjecture}

Until recently, the main results on this conjecture were for small values of $|A|$;
in particular, in~\cite{CDOR}, the conjecture was proved for sets $A$ of size at most $12$.
The first result involving arbitrarily large sets $A$ was presented by Kravitz~\cite{NK},
who used a rectification argument to show that Graham's conjecture holds for all sets $A$ of size
$|A|\le \log p/\log\log p$. A similar argument was also proposed (but not published) by Will Sawin~\cite{Sawin}.

Then, in \cite{BK}, Bedert and Kravitz improved - using a rectification and a two-step probabilistic approach - this upper bound to the following:
\begin{theorem}[\cite{BK}]\label{thm:mainBK}
Let $p$ be a large enough prime and let $c>0$. Then every subset $A \subseteq \mathbb{Z}_p \setminus\{0\}$
is sequenceable provided that
\[
|A| \leq \exp\!\big(c(\log p)^{1/4}\big).
\]
\end{theorem}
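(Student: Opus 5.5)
The plan is to follow the rectification-plus-probabilistic strategy of \cite{BK}: first transfer the problem from $\mathbb{Z}_p$ to the integers, where orderings are much easier to control, and then construct a sequencing there randomly.

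\textbf{Step 1: rectification.} Write $n=|A|\le \exp(c(\log p)^{1/4})$. By a Dirichlet/pigeonhole argument on dilations, there is $\lambda\in\mathbb{Z}_p^*$ such that every element of $\lambda A$ has a representative in $(-p/2,p/2)$ of size at most roughly $p^{1-1/n}$. Dilation preserves sequenceability, so we may assume this. This bound is far too weak when $n$ is large. I would therefore use a structured version: find $\lambda$ and a scale $L$ so that $\lambda A$ is a union of a set $A_1$ of \emph{small} elements, with absolute values at most $p/(Cn)$, and a set $A_2$ of few \emph{large} elements. Here ``few'' means $|A_2|$ at most about $(\log n)^{O(1)}$, and the large elements are controlled modulo structure. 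Partial sums that involve only small elements then never wrap around modulo $p$. Hence collisions modulo $p$ among such sums reduce to collisions in $\mathbb{Z}$.

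\textbf{Step 2: order the small part.} Using the small set $A_1$ in $\mathbb{Z}$, order the positives in increasing order and the negatives in decreasing order, interleaved greedily so that the partial sums stay in a bounded window. In $\mathbb{Z}$ this is the standard argument that integer sets are sequenceable.

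\textbf{Step 3: insert the large elements.} Choose random positions for the elements of $A_2$, or random blocks, possibly also a random shuffle within blocks of $A_1$. A partial sum involving $j$ large elements lies in a translate determined by which large elements have already appeared. Collisions between two partial sums are then of two kinds:
\begin{itemize}
\item \emph{Same set of large elements used.} These collisions are integer collisions within one window, which the monotone structure excludes.
\item \emph{Different sets of large elements used.} These require an equation $\sum_{i\in S}\epsilon_i b_i\equiv x \pmod p$ with $x$ small. Such collisions are avoided for most configurations by a union bound over the random insertion positions.
\end{itemize}
The count of bad events is polynomial in $n$ times $2^{|A_2|}$. Each bad event has probability $O(1/n)$ or smaller. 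This is why $|A_2|$ must be polylogarithmic, which forces the exponent $1/4$ through the rectification step. Nonzero partial sums are handled in the same way.

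\textbf{Main obstacle.} The main obstacle is Step 1. We need a quantitative rectification that splits $\lambda A$ into a small-coordinate part and a very sparse large part, with the sparsity strong enough for the union bound in Step 3 to succeed. I would attempt this with a Bohr-set/Dirichlet iteration: repeatedly dilate so as to shrink as many elements as possible, and peel off the remainder. Tracking the losses through roughly $(\log p)^{1/4}$ iterations is what yields the bound $\exp(c(\log p)^{1/4})$.
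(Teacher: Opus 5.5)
\textbf{There is a genuine gap: the decomposition you ask for in Step~1 does not exist, and Steps~2--3 rest on it.}

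Take $A$ to be a uniformly random $n$-subset of $\mathbb{Z}_p\setminus\{0\}$ with $n=\exp(c(\log p)^{1/4})$. Fix $\lambda$. The expected number of $m$-subsets of $\lambda A$ lying in $(-p/(Cn),p/(Cn))$ is at most $\binom{n}{m}(3/(Cn))^m\le (3e/(Cm))^m$. A union bound over the $p-1$ dilations shows that, with high probability, \emph{every} dilate has only $O(\log p/\log\log p)$ elements in that interval. Since $n$ exceeds every power of $\log p$, your ``large'' part has size $n(1-o(1))$, not $(\log n)^{O(1)}$.

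Changing the scale does not help. A larger threshold lets partial sums of the small elements wrap around, and a smaller one only shrinks the small part. This is exactly the rectification barrier behind the older bound $|A|\lesssim \log p/\log\log p$. No Bohr-set or Dirichlet iteration gets around it, because a random set is simply high-dimensional. Step~3 as stated does not close either: a union bound over polynomially many events, each of probability $O(1/n)$, fails. Your account of where $1/4$ comes from, as roughly $(\log p)^{1/4}$ iterations, also does not match the real mechanism.

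\textbf{The missing idea is to handle the non-rectifiable part through dissociativity rather than sparsity.} In the Bedert--Kravitz argument, whose architecture the paper reproduces in Section~2, this works as follows.

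\begin{enumerate}[(1)]
\item The decomposition is the Structure Theorem (Theorem~\ref{prop:structure}). You peel off dissociated blocks $D_j$ of size $\Theta(R)$ while the remainder has dimension at least $R$. The remainder $P\cup N$ then has dimension $<R$. It can be dilated into a short interval by pigeonholing over a maximal dissociated subset $D$, using $B\subseteq\Span(D)$.
\item The number $s$ of blocks is \emph{not} small; it can be of order $|A|/R$.
\item Each $D_j$ is split at random into quarters. The quarters are laid out so that the first two quarters of every block precede the last two quarters of any block. Then every interval sum over whole pieces contains a fixed-size subset sum of some $D_\ell$. By dissociativity, such a sum takes any given value with probability at most $\binom{|D_\ell|}{|D_\ell|/4}^{-1}=e^{-\Theta(R)}$ (Lemma~\ref{splittinglemma}).
\item Intervals cutting deep into a block (Type~II) are controlled by the random internal orderings, giving bounds like $\exp(-\Theta(K\log R))$. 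Intervals that merely straddle a junction are excluded by the acceptable and permissible orderings.
\item $P$ and $N$ are not ordered greedily. They are ordered via Proposition~\ref{prop:ordering-P-N}, so that $\IS(\bp)$ and $\IS(\bn)$ avoid the sets $Y^{\pm}_j$ built from $D_1$, $D_s$ and $\delta$.
\end{enumerate}

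The exponent $1/4$ comes from a parameter balance. The block size $R$ is capped by the rectification step, and the window $K$ is a small power of $R$. The union bound over the at most $|A|^2$ intervals then forces $\log|A|$ below the weakest of the resulting anticoncentration exponents.
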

They explicitly state their result for $\mathbb{Z}_p$, but their approach can be easily adapted to a generic cyclic group.

Finally, in~\cite{PM}, Graham's conjecture was proved for all sufficiently large primes $p$. This result is a consequence of anticoncentration inequalities developed using a discrete Fourier approach that seems hard to adapt to the cyclic case. 

In this paper, we record two complementary developments. In one direction, we show how it is possible to improve the bound \cite{BK} again using a rectification argument and a one-shot probabilistic approach, and obtain
\begin{theorem}[Improved classical bound]\label{thm:mainClassic}
There exists a constant $c>0$ such that, denoted by $p$ the least prime divisor of $k$, then
every subset $A\subseteq \mathbb{Z}_k\setminus\{0\}$ is sequenceable provided that
\[
|A|\ \le\ \exp\!\big(c(\log p)^{1/3}\big).
\]
\end{theorem}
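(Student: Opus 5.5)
We plan to follow the Bedert–Kravitz strategy in its rectification part and to replace their two-step probabilistic argument by a single random construction. Write $n=|A|$ and let $p$ be the least prime divisor of $k$. First we apply a Dirichlet-type simultaneous approximation to the elements of $A$, viewed in $\frac1k\mathbb{Z}/\mathbb{Z}$: by pigeonhole there is a dilation $\lambda$, coprime to $k$ (this is where the least prime divisor enters; $\lambda$ can be taken among the integers $<p$), such that every element $\lambda a$, $a\in A$, has a representative in a short interval. This is not enough for the whole set once $n$ is large, so we use the standard structured decomposition. We dilate so that $A$ splits as $A=A_{\mathrm{small}}\cup A_{\mathrm{large}}$, where $\lambda A_{\mathrm{small}}$ lies in an interval $(-k/M,k/M)$ with $M$ roughly $\exp(C(\log p)^{2/3})$. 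For $A_{\mathrm{large}}$ we have control only up to a loss controlled by $n$, and this loss is what determines the final exponent. Sequenceability is invariant under automorphisms of $\mathbb{Z}_k$, so we may assume $\lambda=1$.

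Next comes the construction. We order $A_{\mathrm{small}}$ rectified as integers in $(-k/M,k/M)$: positives increasing and negatives arranged so that the partial sums form a walk whose integer values stay in a range of size $<k$. Distinctness modulo $k$ then reduces to distinctness over $\mathbb{Z}$, which is easy to obtain by placing positives and negatives in monotone blocks as in Kravitz. The elements of $A_{\mathrm{large}}$ are inserted one at a time at positions chosen uniformly at random, all simultaneously; this is the one-shot step. A collision $p_i=p_j$ with $i<j$ means that the block sum $a_{i+1}+\dots+a_j$ vanishes mod $k$. A block containing no large element cannot vanish by rectification. If a block contains at least one large element, we condition on everything except the position of one large element $b$ in it. The sum then changes by replacing neighbouring small elements, and we show that the probability of hitting $0$ is at most $O(1/\text{(number of admissible positions)})$, using that $b$ is far from the small sums. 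We also impose $p_i\neq0$ in the same way.

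Finally we bound the expected number of bad pairs. There are at most $n^2$ pairs $(i,j)$, and for each one we need the collision probability to be $o(n^{-2})$. The anticoncentration comes from the sizes of the small elements relative to $k/M$ together with the random slots, so we need $M\gtrsim n^{O(\log n)}$, i.e.\ $\log M\gtrsim(\log n)^2$. The rectification step costs $\log M\lesssim (\log p)/\log n$-type losses, coming from the pigeonhole in dimension about $\log n$. Balancing $(\log n)^2\cdot\log n\approx\log p$ gives $\log n\approx(\log p)^{1/3}$, which is the claimed bound. The $1/4$ in \cite{BK} came from paying this loss twice in the two-step procedure.

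The main obstacle is the anticoncentration estimate in the third step. We must show that a sum of a random-position configuration avoids $0$ modulo $k$ with probability $n^{-2-\epsilon}$ uniformly over blocks, even though different blocks share the same random choices. We would first try conditioning on all but the outermost large element in the block and using the positional freedom of that single element. If this gives too weak a bound when the number of large elements is about $\log n$, we would instead insert them in random groups, so that the freedom of several independent elements can be used multiplicatively.
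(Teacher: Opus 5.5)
\paragraph{There is a genuine gap, and it is the step you flagged as the main obstacle.} Your randomness is uniformly random insertion positions for an unstructured set $A_{\mathrm{large}}$ into a fixed ordering of $A_{\mathrm{small}}$. That randomness cannot give per-interval collision probabilities $o(|A|^{-2})$, for two reasons.

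First, the claimed $O(1/\#\{\text{positions}\})$ bound is not what you get. Fix $i<j$ and condition on the positions of all large elements other than $b$. The small elements occupying positions $i+1,\dots,j$ are then determined only by whether $b$ lands before, inside, or after $[i+1,j]$. So the block sum takes at most three values, and the conditional probability that it vanishes can be of order $(j-i)/n$. Even an $O(1/n)$ bound would be useless against $n^2$ pairs.

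Second, nothing makes $b$ ``far from the small sums'' or from sums of other large elements. Your decomposition allows $A_{\mathrm{large}}$ to contain about $n/2$ pairs $\{b,-b\}$. Each such pair is adjacent with probability $2/n$, so the expected number of vanishing length-$2$ blocks is of constant order and the first-moment argument collapses. Inserting large elements ``in groups'' does not help unless the groups have a structure that forces distinct sub-sums to be distinct.

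\paragraph{The missing idea is that structure.} In the paper, the non-rectified part of $\lambda\cdot A$ is partitioned into \emph{dissociated} sets $D_1,\dots,D_s$ of size $\Theta(R)$, with $R=c_1\max\big((\log p)^{1/2},\log p/\log|A|\big)$ (Theorem~\ref{prop:structure}). These blocks are placed contiguously between $\overline{\bp}$ and $\bn$, after a random splitting into quarters and random internal orderings. The anticoncentration comes from dissociativity: an interval sum containing a random $m$-element sub-sum of a block hits any fixed value with probability at most $\binom{|D|}{m}^{-1}$.

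This yields $\exp(-\Theta(K\log R))$ for Type~II intervals and $\exp(-\Theta(R))$ for Type~I intervals. The boundary intervals are handled by conditioning on acceptable and permissible orderings. Here $K=c_2R^{1/2}$ is the largest choice compatible with Proposition~\ref{prop:ordering-P-N} in Lemma~\ref{lemm:permissible}. The union bound over $|A|^2$ intervals then needs $R^{1/2}\log R\gg\log|A|$. With $R\asymp\log p/\log|A|$, this is exactly the condition $\log|A|\lesssim(\log p)^{1/3}$.

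\paragraph{Your exponent bookkeeping is not derived.} Balancing $\log M\gtrsim(\log n)^2$ against $\log M\lesssim\log p/\log n$ lands on the same exponent, but it is not backed by any estimate. Your parameter regime is also inverted relative to the paper's. You rectify at dimension about $\log n$ with a huge $M$. The paper rectifies everything of dimension below $R\approx(\log p)^{2/3}$ into an interval of length $O(k/|P\cup N|)$, so $M$ is only polynomial in $|A|$. It does this precisely so that the leftover dissociated blocks have size $\asymp R$, large enough to supply the needed anticoncentration.
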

In particular, the one-shot scheme removes the need to separate the treatment of Type I and Type II intervals, and this structural simplification is what ultimately permits the sharper quantitative bound.

Here, we also show how this approach can be localized for the $t$-weak sequenceability problem. 

For this problem, in \cite{CDOR}, the authors proved that if the order of a group is $pe$ then all sufficiently large subsets of the non-identity elements are $t$-weakly sequenceable
when $p>3$ is prime, $e \le 3$ and $t\le 6$.
Then in \cite{CDFAP}, using a hybrid approach that combines Ramsey theory and the probabilistic method,
the authors proved that if the size of a subset $A$ of an abelian group $G$ is at least $t^{\alpha t}$
for some $\alpha > 2$ and $A$ does not contain $0$, then $A$ is $t$-weak sequenceable.

Here, using a one-shot procedure that involves the Lov\'asz Local Lemma, we obtain:

\begin{theorem}\label{thm:main2}
There exists a constant $c > 0$ such that, if $p$ is the least prime divisor of $k$, then every subset $A \subseteq \mathbb{Z}_{k} \setminus\{0\}$
is $t$-weak sequenceable provided that
\[
t \ \leq\ \exp\!\big(c(\log p)^{1/4}\big).
\]
\end{theorem}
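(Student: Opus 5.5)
The plan is to split according to the size of $A$, with a constant $c$ small enough for every step below.

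\emph{Small $A$.} If $|A|\le \exp\!\big(c'(\log p)^{1/3}\big)$, where $c'$ is the constant of \cref{thm:mainClassic}, then $A$ is sequenceable. A sequencing has all partial sums distinct and nonzero, with the usual exception of the full sum. So it is in particular a $t$-weak sequencing for every $t$, and there is nothing more to prove. Hence we may assume $n:=|A|$ is much larger than $t$; in fact $n\ge t^{4/3}$ up to constants, since $t\le \exp\!\big(c(\log p)^{1/4}\big)$.

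\emph{Local reformulation and block structure.} A $t$-weak sequencing is an ordering in which every consecutive block $a_{i+1}+\dots+a_j$ with $1\le j-i\le t$ is nonzero, together with $p_i\neq 0$ for all relevant $i$. The nonzero-prefix condition is global. I would handle it by building the ordering as a concatenation of chunks $C_1,\dots,C_r$, each of size between $2t$ and $4t$, built in two stages.
\begin{enumerate}
\item Choose the partition greedily so that the running sums $\sigma(C_1)+\dots+\sigma(C_s)$ are all nonzero. This is possible because there are many ways to move one element between consecutive chunks.
\item Inside each chunk, require that no prefix sum of the chunk equals minus the running sum of the previous chunks, which again excludes only one value per position.
\end{enumerate}
Together these make every $p_i$ nonzero. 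It then remains to make all block sums of length at most $t$ nonzero.

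\emph{Rectification and the one-shot LLL.} For a single chunk $C$ of size $O(t)$, I would rectify as in the proof of \cref{thm:mainClassic}. Since $|C|\le 4t$ and $t\le \exp\!\big(c(\log p)^{1/4}\big)$, a dilation $\lambda$ maps $C$ into a short interval $[-p/(Kt),p/(Kt)]$ with $K$ large. Then all sums of at most $t$ consecutive elements behave like integer sums, and zero-sum conditions become integer conditions. Within the rectified chunk, I take a uniformly random ordering and impose all the local constraints simultaneously:
\begin{itemize}
\item the blocks of length at most $t$ inside the chunk are nonzero;
\item the blocks straddling the boundary with the neighbouring chunk are nonzero;
\item the prefix conditions of stage 2 hold.
\end{itemize}
Each bad event is determined by a window of at most $2t$ positions. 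Under the rectified integer picture, the probability that a given window of length $\ell\ge 2$ sums to zero should be at most $\exp(-\Omega(\log p)^{1/4})$ times a polynomial in $t$. This is the anticoncentration bound that the rectification at scale $(\log p)^{1/4}$ should give, using a signed-ordering (large versus small elements) argument as in the proof of \cref{thm:mainClassic}. The dependency degree is $O(t^2)$ within a chunk and across adjacent chunks. Since different chunks are ordered independently, the Lov\'asz Local Lemma applies once $e\cdot \Pr\cdot (t^2+1)<1$, and this is where the exponent $1/4$ is dictated.

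\emph{Main obstacle.} The hardest step is the probability bound for a window summing to zero. Length-two windows $\{x,-x\}$ are the worst case, with probability about $1/|C|\approx 1/t$, which is too large against dependency degree $t^2$. I would first try to neutralise this deterministically: pair each $x$ with $-x$ and place such pairs in different chunks, or at fixed non-adjacent positions, before randomising. That leaves only windows of length at least $3$, or windows containing genuinely distinct magnitudes. For those, the rectified integer structure should give a polynomially small probability in the block length. If that fails, the fallback is to enlarge the chunks to size $t^{C}$ while keeping the rectification feasible; this again costs only constants in the exponent $1/4$.
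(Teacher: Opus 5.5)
Your plan has a genuine gap at the rectification step. Rectification is governed by dimension, not by size. The pigeonhole bound $\|\lambda x\|\le k/p^{1/\dim B}$ behind the rectification lemma is only strong enough when $\dim(B)<R=c_1(\log p)^{1/2}$. A chunk of $2t$ to $4t$ elements of an arbitrary $A$ can have dimension of order $\min(t,\log p)$. For $t$ above $(\log p)^{1/2}$, which is almost all of the range $t\le\exp(c(\log p)^{1/4})$, your claim that some dilation maps $C$ into $[-p/(Kt),p/(Kt)]$ is false. Take a uniformly random $m$-subset $C\subseteq\mathbb{Z}_p$. Each fixed $\lambda\neq 0$ succeeds with probability at most $(3/(Kt))^m$. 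So the expected number of good dilations is at most $p\,(3/(Kt))^m\le 1/p$ once $m\log(Kt/3)\ge 2\log p$, e.g.\ for $m=2t$ and $t\ge\log p$. Your stage 1 chooses the partition only to keep running sums nonzero, not to make chunks low-dimensional. The fallback of enlarging chunks to size $t^{C}$ makes rectification harder, not easier.

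Second, even on a chunk that has been rectified, a uniformly random ordering does not give the anticoncentration you assert. Rectified (low-dimensional) sets are exactly the ones with many additive relations. If a chunk is $\{\pm1,\dots,\pm 2t\}$, a fixed window of length $3$ sums to zero with probability $\Theta(1/t)$, and length-$4$ windows $a-b+c-d$ behave the same way. Pairing $x$ with $-x$ does not touch these windows, and against dependency degree $\Theta(t^2)$ the Local Lemma fails. So no bound of the form $\exp(-\Omega((\log p)^{1/4}))$ comes out of rectification, and the exponent $1/4$ is never actually derived in your plan.

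The missing idea is the Structure Theorem (Theorem~\ref{prop:structure}). It splits $\lambda\cdot A$ into two parts:
\begin{itemize}
\item a rectified part $P\cup N$, which is ordered \emph{deterministically} via Proposition~\ref{prop:ordering-P-N}, so no randomness is spent on it;
\item a high-dimensional remainder cut into dissociated blocks $D_j$ of size $\Theta(R)$.
\end{itemize}
Randomness is used only on the $D_j$, where dissociativity gives the needed bounds. Type~I intervals have probability $\exp(-\Theta(R))$ via Lemma~\ref{splittinglemma}. Type~II intervals have probability $\exp(-\Theta(K\log R))$ with $K=c_2R^{1/2}=\Theta((\log p)^{1/4})$. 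Acceptability of $\mathbf{t}_1,\mathbf{t}_u$ and permissibility of adjacent pairs handle the short windows at block boundaries. The Local Lemma with dependency degree $O(t^2R+t^3)$ then needs only $t\le\exp(cK)$, which is where the exponent $1/4$ comes from. Your reduction of small $A$ to Theorem~\ref{thm:mainClassic} is valid, but it does not address this difficulty.
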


\textbf{Notation.}
For a sequence $\mathbf{b}=b_1, \ldots, b_r$, let
\[
\IS(\mathbf{b}) := \{b_1+\cdots+b_j: 0 \leq j \leq r\}
\]
denote the set of initial segment sums of $\mathbf{b}$, and let $\overline{\mathbf{b}}:=b_r, \ldots, b_1$
denote the reverse of $\mathbf{b}$. In addition, we denote
\[
\IS_t(\mathbf{b}) := \{b_1+\cdots+b_j: 0 \leq j \leq t\}.
\]
Using standard asymptotic notation, we say that $f = \Theta(g)$ if there exist two absolute constants
$C_1, C_2 > 0$ such that $C_1 g \leq f \leq  C_2 g$. We also define $f(p) = o(g(p))$ if
$\lim_{p \to \infty} f(p) / g(p) = 0$.

\section{Proof of Theorem \ref{thm:main2} and the one-shot framework}

Let $G$ be an abelian group. A subset $D=\{d_1, \ldots, d_r\} \subseteq G$ is \emph{dissociated} if
\[
\epsilon_1 d_1+\cdots +\epsilon_r d_r \neq 0
\quad \text{for all $(\epsilon_1, \ldots, \epsilon_r) \in \{-1,0,1\}^r \setminus \{(0,\ldots,0)\}$}.
\]
Equivalently, $D$ is dissociated if all of the $2^{|D|}$ subset sums of $D$ are distinct.
The {\it dimension} of a subset $B\subseteq G$, written $\dim(B)$, is the size of the largest dissociated set contained in $B$.
The $\Span(B)$ of a subset $B\subseteq G$, is defined as
$$\Span(B):=\left\{\sum_{b\in B}\epsilon_b b:\epsilon_b\in\{-1,0,1\}\right\}$$
\subsection{Structure Theorem}
We begin by stating a variation of the Structure Theorem of Bedert and Kravitz, enunciated here for the rings $\mathbb{Z}_k$ in a form that also enforces the dissociated sets $D_j$ to have comparable size.

Throughout the $t$-weak part of this paper we set
\[
R := R(k)= c_1 (\log p)^{1/2},
\]
where $c_1>0$ is a sufficiently small absolute constant and $p$ is the least prime divisors of $k$.
The Structure Theorem is based on the following rectification Lemma, proved here for cyclic groups.
\begin{lemma}
If $B\subseteq \mathbb{Z}_k$ is a nonempty subset of dimension $dim(B)<R$, then there is some $\lambda\in \mathbb{Z}_k^{\times}$ such that the dilate $\lambda \cdot B$ is contained in the interval $(-\frac{k}{100|B|},\frac{k}{100|B|})$.
\end{lemma}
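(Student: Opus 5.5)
We take a maximal dissociated subset $D=\{d_1,\dots,d_r\}\subseteq B$, so $r=\dim(B)<R$. By maximality, every $b\in B$ satisfies a relation $\epsilon_0 b+\sum_i\epsilon_i d_i=0$ with $(\epsilon_0,\epsilon_1,\dots,\epsilon_r)\in\{-1,0,1\}^{r+1}$ not all zero; since $D$ is dissociated, $\epsilon_0=\pm1$. Hence $B\subseteq \Span(D)$, and every element of $B$ is a $\{-1,0,1\}$-combination of the $d_i$. So it suffices to find $\lambda\in\mathbb{Z}_k^\times$ with $\|\lambda d_i/k\|_{\mathbb{R}/\mathbb{Z}}\le \delta$ for all $i$, where $\delta:=1/(100\,r\,|B|)$ (with a little slack). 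Then every $\lambda b$ has representative of absolute value at most $r\delta k<k/(100|B|)$.

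\textbf{Simultaneous approximation.} We use Dirichlet's pigeonhole argument on the $r$-torus. Consider the vectors $v_\mu=(\mu d_1/k,\dots,\mu d_r/k)\in(\mathbb{R}/\mathbb{Z})^r$ for $\mu=0,1,\dots,N$. Split the torus into $\lceil 1/\delta\rceil^r$ boxes of side $\delta$. If $N\ge \lceil1/\delta\rceil^r$, two indices $\mu<\mu'$ fall in the same box, and $\lambda=\mu'-\mu\in[1,N]$ satisfies $\|\lambda d_i/k\|\le\delta$ for all $i$.

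\textbf{Invertibility.} Here we need $\lambda$ to be a unit, not merely nonzero. Since every prime divisor of $k$ is at least $p$, any $1\le\lambda<p$ is coprime to $k$. So we need $N<p$, i.e.
\[
(100\,r\,|B|+1)^r<p .
\]

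\textbf{Size of $|B|$, the main obstacle.} The lemma gives no explicit bound on $|B|$, so we must control it via the dimension. We use $B\subseteq\Span(D)$, which gives $|B|\le 3^r$. Then the condition becomes
\[
(100\cdot R\cdot 3^R+1)^R<p .
\]
Its logarithm is $O(R^2+R\log R)$. With $R=c_1(\log p)^{1/2}$ this is at most $C c_1^2\log p<\log p$ once $c_1$ is small and $p$ is large. This is exactly why $R$ is of order $(\log p)^{1/2}$. For small $p$, the bound $R<1$ leaves the statement vacuous or trivial, and we can absorb these cases into the choice of $c_1$.

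Finally, we handle two degenerate points. The case $\lambda b=0$ is fine, since $0$ lies in the interval. The interval is open, so we take $\delta$ slightly smaller, e.g. $1/(200\,r\,|B|)$, which costs only a constant factor inside the same estimate.
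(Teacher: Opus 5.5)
Your proof is correct and follows essentially the same route as the paper. Both take a maximal dissociated $D\subseteq B$, pigeonhole over dilates $\lambda<p$ (which are units because every prime factor of $k$ is at least $p$), and use $B\subseteq\Span(D)$ with $|B|\le 3^{\dim B}$ to reduce to $O(h^2+h\log h)<\log p$ for $h=\dim(B)<c_1(\log p)^{1/2}$. Fixing the precision $\delta$ first and counting $\lceil 1/\delta\rceil^r$ boxes is, if anything, slightly more careful than the paper's direct claim of precision $k/p^{1/r}$ from $p$ points.
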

\proof
Let $D$ be a maximal dissociated set of $B$ and let $\Lambda=\{0,1,\dots,p-1\}\subset \mathbb{Z}_k$. It is clear that, given $\lambda_1,\lambda_2\in \Lambda$, $\lambda_1-\lambda_2\in \mathbb{Z}_k^{\times}$.

Due to the pigeonhole principle, there exists distinct $\lambda_1, \lambda_2 \in \Lambda$ such that $\|\lambda_1 x_i -\lambda_2 x_i\| \leq \frac{k}{p^{1/r}}$ for all $i\in [1,r]$.  Set $\lambda\vcentcolon=\lambda_1-\lambda_2$, we have that $\lambda\in \mathbb{Z}_k^{\times}$ and $\lambda(D)\subseteq  [-\frac{k}{p^{1/\dim(B)}},\frac{k}{p^{1/\dim(B)}}]$.
Since $B\subseteq \Span(D)$, we have
$$B \subseteq \left[-\dim(B)\frac{k}{p^{1/\dim(B)}},\dim(B)\frac{k}{p^{1/\dim(B)}}\right].$$
The thesis follows if we prove that 
\begin{equation}\label{goal}
\dim(B)\frac{k}{p^{1/\dim(B)}}<\frac{k}{100|B|}
\end{equation} 
as long as $c_1$ is chosen to be sufficiently small.

Set $h=dim(B)$, Equation \eqref{goal} is equivalent to
$$\log(h \frac{k}{p^{1/h}})=\log(h)+\log{k}-1/h\log{p}<\log{k}-log{100}-\log{|B|}.$$
Since $|B|<3^{\dim(B)}$, this relation is implied by
$$h\log{h}+2h\log{10}+h^2\log{3}<\log{p}.$$
Which holds since we have assumed that $h < R = c_1(\log{p})^{1/2}$.
\endproof
\begin{theorem}[Structure Theorem \cite{BK}]\label{prop:structure}
For every nonempty subset $A \subseteq \mathbb{Z}_k \setminus \{0\}$,
there is some $\lambda \in \mathbb{Z}_k^\times$ such that $\lambda \cdot A$ can be partitioned as
\[
\lambda\cdot A = P\cup N\cup\Big(\bigcup_{j=1}^{s} D_j\Big),
\]
where:
\begin{enumerate}[(i)]
\item the ``positive'' set $P$ is contained in $\big(0,\frac{k}{4|P\cup N|}\big)$,
the ``negative'' set $N$ is contained in $\big(-\frac{k}{4|P\cup N|},0\big)$,
and the element $\delta:=\sum_{j=1}^{s}\sum_{d\in D_j} d$ is contained in $(-\frac{k}{4},\frac{k}{4})$;
\end{enumerate}
and if $s>0$ then:
\begin{enumerate}[(i)]
\item[(ii)] $P \cup N$ is nonempty, and each $D_j$ is dissociated of size $|D_j| = \Theta(R)$;
\item[(iii)] $\delta \notin \{0\}\cup -P\cup -N$, and moreover $\delta \neq -\sum_{p \in P}p$ if $N$ is nonempty
and $\delta \neq -\sum_{n \in N}n$ if $P$ is nonempty;
\item[(iv)] $D_1 \cup D_s \cup \{\delta\}$ is dissociated.
\end{enumerate}
\end{theorem}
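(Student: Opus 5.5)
We follow the strategy of Bedert and Kravitz: repeatedly extract large dissociated sets, then rectify whatever remains using the preceding Lemma.

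\textbf{Step 1: greedy extraction.} Starting from $A_0=A$, while $\dim(A_i)\ge R$ we pick a dissociated subset $D\subseteq A_i$ of size exactly $\lceil R\rceil$ (any subset of a dissociated set is dissociated) and set $A_{i+1}=A_i\setminus D$. When this stops, the remainder $B$ satisfies $\dim(B)<R$. Since every extracted block has size $\lceil R\rceil$, the sizes are automatically $\Theta(R)$.

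\textbf{Step 2: rectify $B$.} If $B\neq\emptyset$, the Lemma gives $\lambda\in\mathbb{Z}_k^\times$ with $\lambda B\subseteq(-\frac{k}{100|B|},\frac{k}{100|B|})$. Put $P=\lambda B\cap(0,\cdot)$ and $N=\lambda B\cap(\cdot,0)$; note $0\notin A$. This gives the bounds in (i) with room to spare.

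\textbf{Step 3: make $\delta$ small and satisfy (iii)--(iv).} Dilation preserves dissociativity, so each $\lambda D_j$ is still dissociated. The remaining task is to arrange that $\delta=\sum_j\sum_{d\in D_j}d$ (after dilation) lies in $(-k/4,k/4)$ and avoids the finitely many forbidden values in (iii). We would proceed as follows.
\begin{enumerate}
\item Keep one block aside, or move elements between the $D_j$'s and $B$. Moving a single element $d$ of a dissociated block into $B$ changes $\dim(B)$ by at most $1$, so we should extract blocks only until $\dim(B)<R/2$. Then $O(R)$ elements can be returned to $B$ while keeping $\dim<R$, so the Lemma still applies; its proof has slack, since $h\ll(\log p)^{1/2}$.
\item To control $\delta$, use the freedom of which elements of the dissociated sets enter the blocks. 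Different choices of a subset $S\subseteq D_1$ to swap into $B$ give $2^{|D_1|}$ distinct values of $\delta$, because $D_1$ is dissociated.
\item Only $O(|B|)$ values are forbidden: namely $0$, $-P$, $-N$, and the two sums in (iii). The interval requirement in (i) and the dissociativity in (iv), namely that $D_1\cup D_s\cup\{\delta\}$ is dissociated, exclude the values of $\delta$ lying in the span of $D_1\cup D_s$. We exclude these by a counting or pigeonhole choice. Alternatively, reorder the blocks so that $D_1$ and $D_s$ are chosen last.
\end{enumerate}

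\textbf{Main obstacle.} We expect Step 3 to be the hard part: simultaneously guaranteeing $\delta\in(-k/4,k/4)$ after the rectifying dilation $\lambda$ is fixed, and condition (iv). The trouble is that $\lambda$ is determined by $B$, and $\delta$ depends on the swaps, which in turn alter $B$.

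\begin{enumerate}
\item First attempt: split the sum of the blocks into the sum over $D_2,\dots,D_{s-1}$ plus the contributions of $D_1$ and $D_s$. Include one extra element $x$ taken from some block into $B$, and choose $\lambda$ by the Lemma applied to $B\cup\{x_1,\dots,x_m\}$ for a few auxiliary elements. This rectifies those elements too, and adjusting $\delta$ by them moves it in small steps, landing it in a quarter-interval.
\item Second attempt, if that fails: note that among $\lambda,2\lambda,\dots$ (multiples drawn from $\Lambda$) the values $\mu\delta$ equidistribute enough that some $\mu$ keeps $\mu B$ small and puts $\mu\delta$ in $(-k/4,k/4)$. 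This follows by the same pigeonhole over $p$ multipliers, with one extra coordinate, which only costs replacing $R$ by $R+1$.
\end{enumerate}
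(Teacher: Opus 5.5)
\paragraph{Verdict: the proposal has a genuine gap.} The paper does not reprove this result. It imports it from \cite{BK} and supplies only the rectification lemma over $\mathbb{Z}_k$. Your Steps 1--2 match that skeleton: greedy extraction of dissociated blocks of size $\Theta(R)$, then rectifying the low-dimensional remainder $B$. But everything in (ii)--(iv) lives in your Step 3, where you only list strategies, and several of them fail.
\begin{itemize}
\item Swapping $S\subseteq D_1$ into $B$ inserts elements that the already chosen $\lambda$ need not rectify, and up to $|D_1|$ of them, which exceeds your dimension slack.
\item Moving $\lambda\delta$ ``in small steps'' by rectified auxiliaries cannot bring an arbitrary value into $(-k/4,k/4)$.
\item Condition (iv) does not forbid $O(|B|)$ values. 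It forbids all of $\Span(D_1\cup D_s)$, which can have up to $3^{|D_1|+|D_s|}$ elements, more than your $2^{|D_1|}$ candidate values, so counting cannot settle it.
\end{itemize}
You also never ensure $B\neq\emptyset$ when $s>0$. Nor do you ensure $s\ge 3$, which (iv) forces: for $s\le 2$, $\delta$ is the sum of all elements of $D_1\cup D_s$, so $D_1\cup D_s\cup\{\delta\}$ cannot be dissociated.

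\paragraph{What is missing.} The key observation is that $\delta=\sum_{a\in A}a-\sum_{b\in B}b$ depends only on $B$, not on how $A\setminus B$ is cut into blocks. So fix a nonempty $B$ first.
\begin{itemize}
\item \emph{Condition (i).} Your second attempt, applying the Lemma to $B\cup\{\delta\}$ at the cost of one dimension, then correctly gives (i).
\item \emph{Condition (iii).} It consists of finitely many coincidences. However, the conditions $\delta\neq-\sum_{p\in P}p$ and $\delta\neq-\sum_{n\in N}n$ depend on how $\lambda B$ splits by sign. They must therefore be repaired \emph{after} $\lambda$ is fixed, using only moves that keep $\lambda$ valid. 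Examples: move an element of $B$ into a suitable half-block (see below), or move into $B$ an auxiliary element of a middle block that was rectified together with $B\cup\{\delta\}$. Some such case analysis is needed, and it is absent from your proposal.
\item \emph{Condition (iv).} Handle it last; re-cutting blocks does not change $\delta$. Split any dissociated block $F$ into disjoint halves $F',F''$. If $\delta\in\Span(F')\cap\Span(F'')$, subtracting the two representations gives a vanishing $\{-1,0,1\}$-combination of $F$, so $\delta=0$. Hence, since $\delta\neq 0$ by (iii), some half $F'$ has $F'\cup\{\delta\}$ dissociated. Take $D_1,D_s$ to be the two halves of $F'$ and make $F''$ a middle block; this also gives $s\ge 3$.
\end{itemize}
The same halving argument, with a nonzero $b\in B$ in place of $\delta$, shows that any element of $B$ can join some half-block while keeping it dissociated. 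As written, your proposal establishes (i) but not (ii)--(iv).
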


\subsection{Ordering $P$ and $N$}

Here we recall the following important proposition from \cite{BK}.

\begin{proposition}\label{prop:ordering-P-N}
Let $P \subseteq (0,\frac{k}{4|P\cup N|})$ and $N \subseteq (-\frac{k}{4|P\cup N|},0)$ be subsets of $\mathbb{Z}_k$, and let $\delta>0$ be contained in $(0,\frac{k}{4|P\cup N|})$; moreover, assume that $ \delta \neq -\sum_{n \in N}n$ if $P \neq \emptyset$.  Let $Y^+_1, \ldots, Y^+_m,Y^-_1, \ldots, Y^-_m \subseteq \mathbb{Z}$ be finite sets.  Then there are orderings $\bp$ of $P$ and $\bn$ of $N$ such that $\overline \bp, \delta, \bn$ is a sequencing and we have
\begin{align*}
    |\IS(\bp) \cap Y^+_j| \leq \inf_{L \in \mathbb{N}} \left(\frac{|Y^+_j|}{L}+L+4+4\sum_{i=1}^{j-1}|Y^+_i|\right)
\end{align*}
and 
\begin{align*}
    |\IS(\bn) \cap Y^-_j| \leq \inf_{L \in \mathbb{N}} \left(\frac{|Y^-_j|}{L}+L+4+4\sum_{i=1}^{j-1}|Y^-_i|\right)
\end{align*}
for all $1 \leq j \leq m$.
\end{proposition}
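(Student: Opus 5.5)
The plan is to work entirely over the integers. Since $P\subseteq(0,\frac{k}{4|P\cup N|})$, every partial sum of every ordering of $P$ lies in $(0,\frac{k}{4})$. The same holds, with the sign reversed, for $N$. Also $0<\delta<\frac{k}{4|P\cup N|}$. So all the partial sums of $\overline{\bp},\delta,\bn$ lie in an integer window of length less than $k$, and distinctness (and non-vanishing) in $\mathbb{Z}_k$ is the same as distinctness (and non-vanishing) in $\mathbb{Z}$. I would therefore identify $P$, $N$ and $\delta$ with integers and treat $Y^{\pm}_j\subseteq\mathbb{Z}$ directly.

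\textbf{Sequencing condition.} Write $\sigma_P=\sum_{x\in P}x$ and $\sigma_N=\sum_{y\in N}y$.
\begin{itemize}
\item The partial sums of $\overline{\bp}$ are $\sigma_P-s$ for $s\in\IS(\bp)\setminus\{\sigma_P\}$. They are strictly increasing and positive, ending at $\sigma_P$.
\item Adding $\delta>0$ gives $\sigma_P+\delta$, which exceeds every previous partial sum.
\item The sums $\sigma_P+\delta+u$, for $u\in\IS(\bn)$, then strictly decrease.
\end{itemize}
Within each monotone run there are no collisions. The only possible coincidences are between a value on the descending run and a value on the ascending run, or the value $0$ being hit on the descending run before the last step. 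The hypothesis $\delta\neq-\sigma_N$ (when $P\neq\emptyset$) is what excludes the degenerate end collision. I would handle the remaining cross-collisions in one of two ways, and I am not yet sure which is cleaner:
\begin{itemize}
\item put $\sigma_P+\delta-(\sigma_P-\IS(\bp))$ into one of the forbidden sets for $\bn$, which the estimates are designed to absorb; or
\item choose $\bn$ first and then swap the two largest elements of $\bp$ when a collision occurs, which changes only $O(1)$ partial sums.
\end{itemize}

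\textbf{Main lemma.} For a single forbidden set $Y$ and any $L\in\mathbb{N}$ there should be an ordering with $|\IS(\bp)\cap Y|\le |Y|/L+L+O(1)$. My first attempt is an averaging argument. Start from the increasing order $x_1<\dots<x_n$ of $P$ and build a family of $L$ orderings $\pi_1,\dots,\pi_L$, where $\pi_\ell$ is obtained by moving a prescribed element (say $x_{n-\ell}$) to the front. Then I want to show that a fixed integer $y$ lies in $\IS(\pi_\ell)$ for at most one value of $\ell$, apart from $O(L)$ exceptional coincidences coming from the first few steps. Since $\IS(\pi_\ell)$ is a translate of a sub-progression of the increasing prefix sums, each $y$ determines $j$ once $\ell$ is fixed. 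Double counting $\sum_\ell|\IS(\pi_\ell)\cap Y|\le|Y|+L^2+O(L)$ then gives some $\ell$ with at most $|Y|/L+L+O(1)$ hits. I expect this near-disjointness count to be the main obstacle, since controlling how the shifted prefix sums overlap is exactly where the additive $L$ and the constant $4$ arise.

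\textbf{Several sets at once.} To handle all the $Y^+_j$ simultaneously, I would proceed inductively in $j$. Having fixed a choice adapted to $Y^+_1,\dots,Y^+_{j-1}$, the averaging for $Y^+_j$ is performed only among orderings that differ from the current one by local moves. Each such move changes the initial-segment sums at positions that hit a previous $Y^+_i$ in at most $4$ places per element of $Y^+_i$. This yields the error term $4\sum_{i<j}|Y^+_i|$ without destroying the earlier bounds. The same argument, with the signs reversed, produces $\bn$. Finally I would check that the last $O(1)$ adjustments made for the sequencing condition only change a bounded number of partial sums, so that they are absorbed into the additive constant $4$.
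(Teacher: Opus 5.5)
The paper does not prove this statement; it quotes it from \cite{BK} (Proposition~4.1). Your sketch therefore has to stand on its own. Lifting to integers is fine, since all partial sums lie in $(-k/4,k/2)$. The three substantive steps, however, are not yet proofs.

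\emph{Sequencing.} After lifting, the condition is that $u+s\neq-\delta$ for all $u\in\IS(\bn)\setminus\{0\}$ and $s\in\IS(\bp)$, except the pair $(\sum_{n\in N}n,\sum_{p\in P}p)$. So there must be \emph{no} coincidence at all.
\begin{itemize}
\item Your first option cannot achieve this. The forbidden set for $\IS(\bn)$ would be $-\delta-\IS(\bp)$, but the proposition's estimates only bound the number of hits by a quantity at least $4$, never by $0$. Moreover, the $Y^-_j$ are prescribed data, so you cannot enlarge them.
\item Your second option presupposes $O(1)$ coincidences, but two monotone chains can meet in up to $\min(|P|,|N|)$ places. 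For example, with $\bp=p_1,\dots,p_r$ and $\bn=-\delta-p_1,-p_2,\dots,-p_r$ one gets $u_i+s_i=-\delta$ for every $1\le i<r$. A swap changes only the initial-segment sums between the two swapped positions, so it cannot remove coincidences elsewhere and may create new ones.
\end{itemize}
So the closing claim that the sequencing adjustments cost only $O(1)$ is unfounded. You need an actual mechanism producing zero coincidences while preserving the $\IS$-bounds, and the proposal contains none.

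\emph{Single-set lemma.} Your near-disjointness claim is false for your family. Take $P=\{1,\dots,n\}$. Then $\pi_\ell$ begins $n-\ell,1,2,3,\dots$, so $n-1\in\IS(\pi_\ell)$ for every $\ell\in\{1,2,4,7,11,\dots\}\cap[1,L]$, which is about $\sqrt{2L}$ values. In general, $\IS(\pi_\ell)$ and $\IS(\pi_{\ell'})$ meet whenever $x_{n-\ell}-x_{n-\ell'}$ equals a block sum $x_{j+1}+\dots+x_{j'}$ of small elements. The remark that ``each $y$ determines $j$ once $\ell$ is fixed'' says nothing about different $\ell$.

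The fix is to reverse the roles.
\begin{itemize}
\item Start from the decreasing order, with prefix sums $t_0<\dots<t_n$, and move one of the $L$ smallest elements $x_\ell$ to the front.
\item Then $\IS=\{0\}\cup(x_\ell+\{t_0,\dots,t_{n-\ell}\})\cup\{t_{n-\ell+2},\dots,t_n\}$.
\item The middle pieces are pairwise disjoint, because $t_{j+1}-t_j=x_{n-j}\ge x_\ell>x_\ell-x_{\ell'}$ for $j\le n-\ell$ and $\ell'<\ell$.
\item The tails lie among the last $L$ prefix sums.
\end{itemize}
Averaging over $\ell$ then gives $|Y|/L+L$.

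\emph{Several sets at once.} Controlling $Y^+_1,\dots,Y^+_m$ simultaneously, each with its own optimal $L$, is the real content beyond the single-set lemma, and it is only asserted.
\begin{itemize}
\item Both averaging families insert an element at the front and thus translate essentially all initial-segment sums, so they are not ``local moves''.
\item Averaging over genuinely local moves cannot reduce hits in regions those moves do not touch.
\item Moves made for $Y^+_j$ can create new hits in the earlier $Y^+_i$. If you instead restrict the moves so that they do not, you must show that averaging over the restricted family loses only an additive $4\sum_{i<j}|Y^+_i|$. Neither alternative is argued.
\end{itemize}
What is missing is an explicit family of modifications that has the disjointness needed for averaging and changes only $O(1)$ initial-segment sums per element of the sets already treated.
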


\subsection{Splitting, rearrangement, and a one-shot control of Type I and Type II}

We begin with the standard anti-concentration input for dissociated sets.

\begin{lemma}[Lemma 5.1 of \cite{BK}]\label{splittinglemma}
Let $D\subset G$ be a dissociated set, and let $D=D^{(1)} \sqcup D^{(2)} \sqcup D^{(3)} \sqcup D^{(4)}$
be a uniformly random partition of $D$ into four sets of equal size.
Then for every proper subset $I \subset [4]$ and every $x\in G$,
\[
\mathbb{P}\Big(\sum_{i \in I}\sum_{d \in D^{(i)}}d=x\Big)
\leq \binom{|D|}{|D| \cdot |I|/4}^{-1}
\leq \binom{|D|}{|D|/4}^{-1}.
\]
\end{lemma}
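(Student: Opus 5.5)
The plan is to reduce the statement to a counting argument about subsets of $D$, using that $D$ is dissociated.

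Fix a proper subset $I\subset[4]$ and set $m:=|D|\cdot|I|/4$. Let $S:=\bigcup_{i\in I}D^{(i)}$, so that the sum in question is $\sigma(S):=\sum_{d\in S}d$.

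\textbf{Step 1: $S$ is uniform.} The partition $D^{(1)}\sqcup\cdots\sqcup D^{(4)}$ is uniformly random among partitions into four labelled parts of size $|D|/4$. Its law is therefore invariant under every permutation of $D$. Consequently $S$ is a uniformly random subset of $D$ of size exactly $m$. Concretely, each $m$-subset $T\subseteq D$ arises from the same number of partitions, namely the number of ways to split $T$ into the $|I|$ labelled parts and $D\setminus T$ into the remaining $4-|I|$ parts. Hence $\mathbb{P}(S=T)=\binom{|D|}{m}^{-1}$ for every $m$-subset $T$.

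\textbf{Step 2: at most one $m$-subset has sum $x$.} Since $D$ is dissociated, all $2^{|D|}$ subset sums of $D$ are distinct. So for a fixed $x\in G$ there is at most one subset $T\subseteq D$ with $\sigma(T)=x$, and in particular at most one of size $m$. Combining with Step 1,
\[
\mathbb{P}\big(\sigma(S)=x\big)=\sum_{|T|=m,\ \sigma(T)=x}\mathbb{P}(S=T)\le\binom{|D|}{m}^{-1}.
\]
This is the first inequality.

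\textbf{Step 3: comparing binomial coefficients.} For the second inequality, write $n=|D|$ and $m=jn/4$ with $j=|I|$. The relevant cases are $j\in\{1,2,3\}$. Here $I$ must be taken nonempty: for $I=\emptyset$ the sum is deterministically $0$, the probability can equal $1$, and the second bound fails. So "proper subset" should be read as nonempty and proper, as in \cite{BK}.

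For $j\in\{1,2,3\}$ we use two facts about $k\mapsto\binom{n}{k}$:
\begin{itemize}
\item Symmetry gives $\binom{n}{3n/4}=\binom{n}{n/4}$.
\item Unimodality, with the maximum at $k=n/2$, gives $\binom{n}{n/2}\ge\binom{n}{n/4}$.
\end{itemize}
Hence $\binom{n}{m}\ge\binom{n}{n/4}$ for each such $j$, and inverting yields the claim.

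\textbf{Main obstacle.} The only point needing care is Step 1: checking that the union of the chosen blocks is exactly uniform over $m$-subsets. Everything else is immediate once it is noted that dissociativity means the subset-sum map is injective.
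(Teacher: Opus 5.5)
Your proof is correct and is essentially the standard argument: the paper gives no proof of its own and simply cites Lemma~5.1 of \cite{BK}, where the lemma is proved exactly this way. There, as in your Steps 1--2, $\bigcup_{i\in I}D^{(i)}$ is a uniformly random subset of size $|I||D|/4$, and dissociativity makes the subset-sum map injective, after which your binomial comparison finishes. Your remark that $I$ must be nonempty for the second inequality (and that $4\mid |D|$ is implicit) is also right.
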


\medskip
Starting from a set $A$, we consider $D_1,\dots,D_s$ to be the dissociated sets appearing in the structural decomposition of $\lambda\cdot A$ provided by Theorem~\ref{prop:structure} and $P\subseteq (0,\frac{k}{4|P\cup N|})$, $N\subseteq (-\frac{k}{4|P\cup N|},0)$ the sets of positive and negative elements there defined. We split and rearrange the dissociated sets as follows.

\begin{enumerate}[(S1)]
\item For each $j\in\{2,\dots,s-1\}$, we partition $D_j$ into four equal parts
$D_j = D_j^{(1)}\sqcup D_j^{(2)}\sqcup D_j^{(3)}\sqcup D_j^{(4)}$ uniformly at random
as in Lemma~\ref{splittinglemma}. We do all these splittings independently.
\item For the endpoint blocks $D_1$ and $D_s$, we choose a \emph{uniform random permutation}
$\sigma_1$ of $D_1$ and define $D_1^{(1)},\dots,D_1^{(4)}$ as the four consecutive segments (equal size)
of the list $\sigma_1$. Likewise, we choose a uniform random permutation $\sigma_s$ of $D_s$
and define $D_s^{(1)},\dots,D_s^{(4)}$ as consecutive segments.
\end{enumerate}

Next, we place these newly formed sets, together with $P$ and $N$, in the deterministic order
\begin{align}\label{eq:splitandrearrange}
P, D_1^{(1)},D_1^{(2)},D_2^{(1)},D_2^{(2)},\ldots,D_{s}^{(1)},D_s^{(2)},
D_1^{(3)},D_1^{(4)},D_2^{(3)},D_2^{(4)},\ldots,D_s^{(3)},D_{s}^{(4)}, N.
\end{align}
Write $T_1,\dots,T_u$ (with $u=4s$) for the resulting sequence of dissociated sets in~\eqref{eq:splitandrearrange},
and set $\tau_j:=\sum_{t\in T_j}t$. We also denote
\begin{align*}
\sum_{\leq M} (T_j) := \Big\{ \sum_{t \in S} t : S \subseteq T_j,\ |S| \leq M \Big\}, \\
\sum_{= M} (T_j) := \Big\{ \sum_{t \in S} t : S \subseteq T_j,\ |S| = M \Big\}\,.
\end{align*}

Fix an integer $K = c_2 R^{1/2}$ where $c_2$ is a positive small enough constant.
A proper nonempty interval $I \subset [1, |A|]$ is of \emph{Type II} if it contains between $K$ and $|T_j|-K$ elements of some block $T_j$. Otherwise $I$ is of \emph{Type I}.

Now, set
\begin{align*}\label{Y_jdefinition}
    Y^+_j\vcentcolon=-\sum_{= j}(D_1)\cup\left(-\delta+\sum_{= j}(D_s)\right) \quad \text{and} \quad Y^-_j\vcentcolon=-\sum_{= j}(D_s)\cup\left(-\delta+\sum_{= j}(D_1)\right)
\end{align*} for each $1\leqslant j \leqslant K$, and apply Proposition \ref{prop:ordering-P-N}. This provides orderings $\bp$ of $P$ and $\bn$ of $N$ such that the sequence $\overline{\bp},\delta,\bn$ is a sequencing and such that the bounds in Proposition \ref{prop:ordering-P-N} hold.

We say that an ordering $t_1,\dots,t_{|T_1|}$ of $T_1$ is \emph{acceptable} if $$t_1+\dots+t_k\notin -\IS(\bp)\cup \left(\delta+\IS(\bn)\right) \quad \text{for all $1 \leq k \leqslant K$},$$ and say that an ordering $t_1,\dots,t_{|T_u|}$ of $T_u$ is \emph{acceptable} if $$t_1+\dots+t_k\notin -\IS(\bn)\cup \left(\delta+\IS(\bp)\right) \quad \text{for all $1 \leq k \leqslant K$}.$$

 We then state the following lemma, which is an improvement of Lemma~6.1 of \cite{BK}.

\begin{lemma}\label{lemm:permissible}
    Let $\bt_1$ be the order induced by $\sigma_1$ on $T_1$ and $\bt_u$ be the order induced by $\sigma_s$ on $T_u$. Then, we have that $\bt_1$ is acceptable with probability at least $0.99$ and $\bt_u$ $0.99$ is acceptable with probability at least $0.99$.
\end{lemma}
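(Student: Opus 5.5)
The plan is a union bound over $1\le k\le K$ and over the two forbidden sets. Each bad event will be controlled by combining the anti-concentration coming from dissociativity of $D_1$ with the bounds on $|\IS(\bp)\cap Y^{\pm}_k|$ and $|\IS(\bn)\cap Y^{\pm}_k|$ from Proposition~\ref{prop:ordering-P-N}. I treat $\bt_1$; the case of $\bt_u$ is symmetric, with $D_s$ in place of $D_1$ and the roles of $\bp,\bn$ (and of $Y^+,Y^-$) exchanged.

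\emph{Step 1: reduction to a uniformly random $k$-subset.} Write $m:=|D_1|=\Theta(R)$. Since $\sigma_1$ is a uniform random permutation of $D_1$ and $T_1=D_1^{(1)}$ is its first segment, for each $k\le K\le m/4$ the set of the first $k$ entries of $\bt_1$ is a uniformly random $k$-subset $S_k\subseteq D_1$. Because $D_1$ is dissociated, distinct subsets have distinct sums. Hence the map $S\mapsto\sum_{d\in S}d$ is injective on $k$-subsets, and for every set $X$
\[
\mathbb{P}\Big(\sum_{d\in S_k}d\in X\Big)=\frac{|X\cap \sum_{=k}(D_1)|}{\binom{m}{k}} .
\]

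\emph{Step 2: translate to the sets $Y^\pm_k$.} We have $\sum S_k\in -\IS(\bp)$ iff $-\sum S_k\in\IS(\bp)\cap\big(-\sum_{=k}(D_1)\big)\subseteq \IS(\bp)\cap Y^+_k$. Similarly, $\sum S_k\in\delta+\IS(\bn)$ iff $\sum S_k-\delta\in \IS(\bn)\cap\big(-\delta+\sum_{=k}(D_1)\big)\subseteq\IS(\bn)\cap Y^-_k$. Injectivity of the relevant affine maps on sums gives
\[
\mathbb{P}(\text{$\bt_1$ bad at step $k$})\le \frac{|\IS(\bp)\cap Y^+_k|+|\IS(\bn)\cap Y^-_k|}{\binom{m}{k}} .
\]

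\emph{Step 3: plug in Proposition~\ref{prop:ordering-P-N}.} Since $|D_1|,|D_s|=\Theta(R)$ by Theorem~\ref{prop:structure}(ii), we have $|Y^\pm_i|\le 2\binom{M}{i}$ with $M=\max(|D_1|,|D_s|)$. I will assume $|D_1|=|D_s|$ up to constants and absorb the ratio of binomials, which is harmless for $i\le K=o(R)$. Choosing $L\approx|Y^\pm_k|^{1/2}$, each numerator is at most
\[
O\Big(\tbinom{m}{k}^{1/2}\Big)+4+O\Big(\textstyle\sum_{i<k}\binom{m}{i}\Big).
\]
After dividing by $\binom{m}{k}$ there are three contributions:
\begin{itemize}
\item $O\big(\binom{m}{k}^{-1/2}\big)$, which sums over $k\ge1$ to $O(m^{-1/2})$;
\item $4/\binom{m}{k}$, which sums to $O(1/m)$;
\item $\sum_{i<k}\binom{m}{i}/\binom{m}{k}$, which is $O(k/m)$ for $k\le m/4$ by comparison with a geometric series with ratio $k/(m-k)$.
\end{itemize}

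\emph{Step 4: sum over $k$.} The total failure probability is $O(m^{-1/2})+O\big(\sum_{k\le K}k/m\big)=O(m^{-1/2})+O(K^2/m)=O(m^{-1/2})+O(c_2^2)$, using $K=c_2R^{1/2}$ and $m=\Theta(R)$. This is at most $0.01$ once $c_2$ is small and $p$ is large.

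The main obstacle is the cumulative term $4\sum_{i<k}|Y_i|$ in Proposition~\ref{prop:ordering-P-N}. It is exactly what forces $K\asymp R^{1/2}$, and it requires the sizes of $D_1$ and $D_s$ to be comparable, which Theorem~\ref{prop:structure} guarantees. The rest is bookkeeping of binomial ratios.
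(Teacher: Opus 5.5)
Your argument follows the paper's proof. You take a union bound over $1\le k\le K$, use dissociativity to make $t_1+\cdots+t_k$ uniform on $\sum_{=k}(D_1)$, apply Proposition~\ref{prop:ordering-P-N} with $L\approx|Y_k|^{1/2}$, and the cumulative term produces $K^2/|D_1|=O(c_2^2)$. Summing the per-$k$ bounds instead of requiring $0.01K^{-1}$ at each $k$ makes no difference. Routing the event $t_1+\cdots+t_k\in\delta+\IS(\bn)$ through $\IS(\bn)\cap Y^-_k$ is actually more accurate than the paper, which cites only $Y^+_k$.

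There is a genuine gap at the one point where you go beyond the paper. Absorbing the ratio of binomials is \emph{not} harmless when $|D_1|$ and $|D_s|$ are only comparable up to a constant factor. Suppose $m=|D_1|$ and $|D_s|=\alpha m$ with $\alpha>1$ fixed. Then $\binom{\alpha m}{i}\ge\alpha^{i}\binom{m}{i}$, so the cumulative term at step $k$ contains
\[
\frac{4\binom{\alpha m}{k-1}}{\binom{m}{k}}\ \ge\ 4\alpha^{k-1}\,\frac{k}{m-k+1}.
\]
For $k$ near $K=c_2R^{1/2}$ this is $\exp(\Theta(R^{1/2}))\cdot R^{-1/2}\to\infty$, and the union bound gives nothing. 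The $|Y_k|^{1/2}$ term survives the mismatch; only the cumulative term breaks. The same happens for $\bt_u$ with $D_1$ and $D_s$ swapped, so one of the two estimates fails unless the sizes nearly coincide.

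What your computation actually needs is $|D_s|/|D_1|=1+O(1/K)$, most simply $|D_1|=|D_s|$. In that case $|Y^\pm_i|\le 2\binom{m}{i}$ and your Steps~3--4 are correct verbatim. Theorem~\ref{prop:structure} only provides $|D_j|=\Theta(R)$, so the equal-size condition must be imposed in the structure step rather than derived from it.

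The paper's proof tacitly relies on the same condition when it bounds $\sum_{j<k}|Y^+_j|$ by $O\big(\binom{|D_1|}{k}K/|D_1|\big)$, so the defect is shared. Still, your stated justification, that the ratio is harmless because $K=o(R)$, is false and should be replaced by this hypothesis.
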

\begin{proof}
We prove only the statement for $\bt_1$ since the argument for $\bt_u$ is identical.  Let $\bt_1 = t_1, \ldots, t_{|T_1|}$ be our random ordering induced by $\sigma_1$ on $T_1$.  By the union bound, it suffices to show that $$\mathbb{P}(t_1+\cdots+t_k \in -\IS(\bp) \cup (\delta+\IS(\bn))) \leqslant 0.01K^{-1}$$
for each $1 \leqslant k \leqslant K$.
Fix some $1 \leqslant k \leqslant K$.  Then the quantity $t_1+\cdots+t_k$ is uniformly distributed on the set $\sum_{=k}(D_1)$, which has size $\binom{|D_1|}{k}$.
Then by Proposition \ref{prop:ordering-P-N}, with $L:=\lfloor |Y_k^+|^{1/2}\rfloor$, we have that
$$\left|\sum_{=k}(D_1) \cap (-\IS(\bp) \cup (\delta+\IS(\bn))) \right| = O\left( |Y_k^+|^{1/2}+\sum_{j<k} |Y_j^+| + 1 \right).$$
For $1 \leqslant k \leqslant K$ (recall that $K=c_2 R^{1/2}$) it gives
$$\left|\sum_{=k}(D_1) \cap (-\IS(\bp) \cup (\delta+\IS(\bn))) \right| = O\left( \binom{|D_1|}{k} \cdot \frac{K}{|D_1|} \right) = O\left( \binom{|D_1|}{k} \cdot c_2^2 K^{-1} \right).$$

It follows that for $1 \leqslant k \leqslant K$ we have that
$$\mathbb{P}(t_1+\cdots+t_k \in -\IS(\bp) \cup (\delta+\IS(\bn))) = O\left(c_2^2 K^{-1} \right)$$
is at most $0.01 K^{-1}$ as long as $c_2$ is sufficiently small.
\end{proof}

We say that a pair of partial orderings $t_1, \ldots, t_k$ of $T_{2j}$ and $t'_1, \ldots, t'_\ell$ of $T_{2j+1}$ is \emph{permissible} if $$t_1+\cdots+t_i + t'_1+\cdots+t'_j\neq 0 \quad \text{for all $(i,j)$}.$$

We recall the symmetric Lov\'asz Local Lemma, which we use in the $t$-weak regime.

\begin{lemma}[Lov\'asz Local Lemma (symmetric case)]\label{lem:LLL}
Let $E_{1},E_{2},\ldots, E_{m}$ be events in a probability space,
where each event $E_{i}$ is mutually independent of all the other events $E_{j}$
except for at most $D$, and $\mathbb{P}(E_{i})\leq P$ for all $1\le i\leq m$.
If $ePD\leq 1$, then $\Pr(\cap_{i=1}^{m}\overline{E_{i}})>0$.
\end{lemma}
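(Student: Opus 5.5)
We argue directly by the usual conditional-probability induction. The key refinement is that inside the induction a conditioning event never has its whole neighbourhood in the conditioning set. This lets us replace the classical hypothesis $eP(D+1)\le 1$ by $ePD\le 1$. Fix a dependency graph $\Gamma$ on $[m]$ of maximum degree at most $D$, so that each $E_i$ is mutually independent of $\{E_j : j\notin \Gamma(i)\cup\{i\}\}$. For $S\subseteq[m]$ write $F_S:=\bigcap_{j\in S}\overline{E_j}$.

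We first dispose of degenerate cases. If $D=0$ the events are mutually independent, and the conclusion $\Pr(F_{[m]})=\prod_i(1-\Pr(E_i))>0$ needs $P<1$. The hypothesis $ePD\le1$ is vacuous when $D=0$, so in that case we must additionally assume $P<1$; otherwise the statement fails, e.g.\ for $P=1$. This is harmless in the application, where $P$ is small. From now on let $D\ge1$, so $P\le 1/(eD)<1$.

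\textbf{Key claim.} Let $D\ge 2$, $i\notin S$, and suppose $|\Gamma(i)\cap S|\le D-1$. Then $\Pr(F_S)>0$ and $\Pr(E_i\mid F_S)\le 1/D$. We prove this by induction on $|S|$. Split $S=S_1\sqcup S_2$ with $S_1=\Gamma(i)\cap S$. Mutual independence gives
\[
\Pr(E_i\mid F_S)\le \frac{\Pr(E_i\cap F_{S_1}\mid F_{S_2})}{\Pr(F_{S_1}\mid F_{S_2})}\le\frac{P}{\Pr(F_{S_1}\mid F_{S_2})}.
\]
Write $S_1=\{j_1,\dots,j_r\}$ with $r\le D-1$. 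The denominator equals $\prod_{\ell}\bigl(1-\Pr(E_{j_\ell}\mid F_{S_2\cup\{j_1,\dots,j_{\ell-1}\}})\bigr)$. Each $j_\ell$ is adjacent to $i$, and $i$ lies outside the conditioning set. Hence $j_\ell$ has at most $D-1$ neighbours in a conditioning set strictly smaller than $S$, and the inductive hypothesis bounds each factor below by $1-1/D$; positivity of the conditioning probabilities propagates along the same induction. So the denominator is at least $(1-1/D)^{D-1}\ge e^{-1}$, and $\Pr(E_i\mid F_S)\le eP\le 1/D$.

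\textbf{Conclusion for $D\ge2$.} Write $\Pr(F_{[m]})=\prod_{i=1}^m\bigl(1-\Pr(E_i\mid F_{\{1,\dots,i-1\}})\bigr)$. Now $i$ may have all $D$ of its neighbours among the earlier indices, so the key claim does not apply to $E_i$ itself. We repeat the same splitting one level up. Each neighbour $j$ of $i$ in the denominator is conditioned on a set not containing $i$, so the key claim does apply to it. This gives
\[
\Pr(E_i\mid F_{\{1,\dots,i-1\}})\le \frac{P}{(1-1/D)^{D}}\le \frac{1}{eD}\Bigl(\frac{D}{D-1}\Bigr)^{D}\le \frac{1}{D-1}\cdot\frac{(1+\frac1{D-1})^{D-1}}{e}\le\frac1{D-1}.
\]
For $D\ge3$ this is $<1$. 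For $D=2$ the middle expression equals $4/(2e)=2/e<1$. Hence every factor is positive and $\Pr(F_{[m]})>0$.

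\textbf{The case $D=1$.} Here the key claim only gives the trivial bound $1/D=1$, so we argue directly. An index $j$ adjacent to $i$ has no other neighbours, so $E_j$ is independent of every conditioning set avoiding $i$. Hence its conditional probability is at most $P\le 1/e$. Then $\Pr(E_i\mid\cdot)\le P/(1-1/e)=1/(e-1)<1$, and the product is again positive.

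The main obstacle is exactly the off-by-one at the top level. The bookkeeping must be arranged so that the key claim is invoked only for indices with a neighbour outside the conditioning set; that is what turns the factor $(1-1/D)^D$ into $(1-1/D)^{D-1}\ge 1/e$. The small cases $D\le 2$, and $D=0$ with its extra assumption $P<1$, must be treated separately.
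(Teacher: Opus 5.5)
The paper only quotes this lemma and gives no proof, so the comparison is with the standard justifications. The textbook induction (the asymmetric lemma with $x_i=1/(D+1)$) yields only $eP(D+1)\le 1$. The form $ePD\le 1$ is usually deduced from Shearer's sharp threshold $(D-1)^{D-1}/D^D$ for undirected dependency graphs, which exceeds $1/(eD)$ for $D\ge 2$ because $(1+\frac{1}{D-1})^{D-1}<e$.

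Your route is different and more elementary. Inside the recursion, the parent $i$ is always a neighbour of $j_\ell$ lying outside the conditioning set, so you lose only $(1-1/D)^{D-1}\ge e^{-1}$, and you pay the off-by-one just once, at the top level. Shearer gives the optimal constant; your argument gives a self-contained proof of the $ePD\le 1$ form. Your computations are right:
\begin{itemize}
\item the key claim closes with $eP\le 1/D$;
\item the top level gives $\frac{1}{e(D-1)}(1+\frac{1}{D-1})^{D-1}<\frac{1}{D-1}$, and $2/e$ for $D=2$;
\item for $D=1$ you get $P/(1-P)\le 1/(e-1)$.
\end{itemize}
You are also right that the statement as written fails for $D=0$ (take $m=1$ and $E_1$ the whole space).

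\textbf{Unstated symmetry assumption.} You silently replace the hypothesis by a \emph{symmetric} dependency graph. As phrased, the lemma only gives, for each $i$, an exceptional set $\Gamma(i)$ with $|\Gamma(i)|\le D$. That is a digraph of out-degree at most $D$, and nothing forces $j\in\Gamma(i)\Rightarrow i\in\Gamma(j)$. Every saving in your proof uses exactly this symmetry:
\begin{itemize}
\item ``each $j_\ell$ is adjacent to $i$, hence has at most $D-1$ neighbours in the conditioning set'';
\item the same step at the top level;
\item ``$j$ has no other neighbours'' in the case $D=1$.
\end{itemize}
Without symmetry, $j_\ell$ may have all $D$ of its exceptional events in the conditioning set. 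Your induction then collapses to the classical one, which only closes for $P\le D^D/(D+1)^{D+1}<1/(eD)$.

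So what you have proved is the lemma for an undirected dependency graph of maximum degree $D$, and you should state that hypothesis explicitly. It is the version the paper actually needs: there the dependencies come from shared random splittings and orderings, which is a symmetric relation.

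\textbf{Positivity bookkeeping.} The key claim asserts $\Pr(F_S)>0$, but when $\Gamma(i)\cap S=\emptyset$ you have $S_2=S$ and your induction gives nothing. For instance, $S$ could be a $D$-regular component not containing $i$, where positivity of $\Pr(F_S)$ is the whole lemma. Make $\Pr(F_S)>0$ a hypothesis of the key claim instead; it then holds for all subsets of $S$. In the conclusion, positivity of $\Pr(F_{\{1,\dots,i-1\}})$ comes from the outer induction on the product, and you only apply the key claim to subsets of $\{1,\dots,i-1\}$.
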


\subsubsection*{A one-shot lemma}
We now state a lemma that simultaneously controls Type~I and Type~II intervals by sampling the splitting and the internal orderings in a single step.

\begin{lemma}[One-shot Type I/II control]\label{lem:oneshot}
Assume $t\le \exp(cK)$ for a sufficiently small absolute $c>0$. Let
$D_1,\dots,D_s\subseteq \mathbb{Z}_k$ be dissociated sets, each of size $\Theta(R)$,
such that $D_1\cup D_s\cup\{\delta\}$ is dissociated, where $\delta:=\sum_{j=1}^s\sum_{d\in D_j} d$.
Let $\bp$ and $\bn$ be sequences over $\mathbb{Z}_k$ and assume that $\overline{\bp},\delta,\bn$
is a $t$-weak sequencing.

Fix an integer $K = c_2 R^{1/2}$ where $c_2$ is a small positive constant.
Choose the sets $T_1,\dots,T_u$ by the splitting procedure above, and then choose
orderings $\bt_1,\dots,\bt_u$ of the blocks $T_1,\dots,T_u$ as follows:
\begin{enumerate}[(a)]
\item $\bt_1$ is the order induced by $\sigma_1$ on $T_1$ and $\bt_u$ is the order induced by $\sigma_s$ on $T_u$,
and we \emph{condition} on the event that both $\bt_1$ and $\bt_u$ are \emph{acceptable} with parameter $K$
(in the sense of Lemma \ref{lemm:permissible}).
\item For each internal adjacent pair $(T_{2j},T_{2j+1})$ we sample uniformly at random the pair $(\bt_{2j},\bt_{2j+1})$ from the permissible pairs of orderings of length $K$ (as done in \cite[Lemma 6.2]{BK}).
\end{enumerate}
Let $a_1,\dots,a_{|A|}$ be the concatenation
\[
a_1,\dots,a_{|A|} \ :=\ \overline{\bp},\ \bt_1,\dots,\bt_u,\ \bn.
\]

Then:
\begin{enumerate}[(1)]
\item \textup{(Type II anti-concentration)} For every Type~II interval $I\subset[1,|A|]$ with $|I|\le t$,
\[
\mathbb{P}\Big(\sum_{i\in I} a_i = 0\Big)\ \le\ \exp\!\big(-\Theta(K\log R)\big).
\]
\item \textup{(Type I anti-concentration)} For every Type~I interval $I\subset[1,|A|]$ with $|I|\le t$,
\[
\mathbb{P}\Big(\sum_{i\in I} a_i = 0\Big)\ \le\ \exp\!\big(-\Theta(R)\big).
\]

\item \textup{($t$-weak existence via LLL)}
With positive probability (hence there exists a choice of the random splittings and orderings)
every interval $I\subset[1,|A|]$ with $|I|\le t$ has nonzero sum, and therefore
$a_1,\dots,a_{|A|}$ is a $t$-weak sequencing.
\end{enumerate}
\end{lemma}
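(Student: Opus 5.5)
The plan follows Bedert--Kravitz: first prove the anti-concentration bounds (1) and (2) interval by interval, then combine them through the symmetric Lov\'asz Local Lemma (Lemma~\ref{lem:LLL}). Throughout, fix an interval $I$ with $|I|\le t$. Write $\sum_{i\in I}a_i$ as a sum of three contributions: those from full blocks $T_j$ contained in $I$, those from the at most two blocks that $I$ cuts at its endpoints, and the parts lying in $\overline{\bp}$ or $\bn$. We expose all randomness except that of one carefully chosen block, and then bound the probability that this block's partial contribution hits the single forbidden value determined by everything else.

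\emph{Type II.} Let $I$ contain between $K$ and $|T_j|-K$ elements of some block $T_j$. Suppose first that $T_j$ is an internal block $D_i^{(\ell)}$ with $2\le i\le s-1$. Its content is the uniformly random split of Lemma~\ref{splittinglemma}. Note that the internal orderings only fix the first $K$ elements of each block in a permissible pair, so the remaining elements are exchangeable. Conditioning on the set of the first $K$ elements and on everything outside $D_i$, the sum of the portion of $T_j$ inside $I$ is a sum of a uniformly random $m$-subset of a dissociated set of size $\Theta(R)$, with $K\le m\le \Theta(R)-K$. Dissociativity makes distinct subsets have distinct sums, so any fixed value is attained with probability at most $\binom{\Theta(R)}{K}^{-1}=\exp(-\Theta(K\log R))$. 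Two corrections are needed:
\begin{itemize}
\item conditioning on permissibility only changes the measure by a constant factor, since permissible pairs form a constant proportion, as in \cite[Lemma 6.2]{BK};
\item for the endpoint blocks $D_1,D_s$ the same bound follows directly from the uniform random permutations $\sigma_1,\sigma_s$, and conditioning on acceptability costs a factor at most $1/0.99^2$ by Lemma~\ref{lemm:permissible}.
\end{itemize}

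\emph{Type I.} Here every block meets $I$ in fewer than $K$ or more than $|T_j|-K$ elements. Then $I$ contains a union of consecutive whole blocks plus short pieces of length less than $K$ at its ends. The subcases are:
\begin{itemize}
\item If $I$ contains whole blocks, I use that a sum of whole parts $\sum_{\ell\in J}\sum D_i^{(\ell)}$ with $J$ proper is anti-concentrated at level $\binom{|D_i|}{|D_i|/4}^{-1}=\exp(-\Theta(R))$ by Lemma~\ref{splittinglemma}. If instead all four parts of some $D_i$ lie in $I$, then $I$ must be huge given the rearrangement~\eqref{eq:splitandrearrange}, and this is handled via $\delta\neq0$ and the condition $\delta\notin -P\cup-N$ of Theorem~\ref{prop:structure}.
\item If $I$ sits only at a junction $T_{2j},T_{2j+1}$ with short pieces, permissibility forces the sum to be nonzero deterministically.
\item If the junction is $\overline{\bp},T_1$ or $T_u,\bn$, acceptability forces nonzero sum.
\item Intervals inside $\overline{\bp},\delta,\bn$ are nonzero by the $t$-weak hypothesis.
\end{itemize}
In all cases the bound is at most $\exp(-\Theta(R))$, or the event is empty.

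\emph{LLL.} Let $E_I$ be the event that $\sum_{i\in I}a_i=0$, for $|I|\le t$. Each $E_I$ is determined by the random choices of the $O(t/R+2)$ blocks $D_i$ that $I$ meets, including the pair-samples. It is therefore mutually independent of all $E_{I'}$ whose intervals avoid those $D_i$'s and their paired neighbours. Because of~\eqref{eq:splitandrearrange} each $D_i$ appears in two regions of the sequence, so the number of dependent $I'$ is $D\le O(t^2\cdot t)=t^{O(1)}$. Since $K\log R\ll R$, the binding probability bound is $P=\exp(-\Theta(K\log R))$. Then $eP D\le 1$ holds once $t\le\exp(cK)$ for small $c$, and Lemma~\ref{lem:LLL} gives (3).

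The main obstacle is making the dependency structure honest. The conditioning on acceptability of $\bt_1,\bt_u$ (a global event) and the permissible-pair sampling couple blocks, so I would define the product probability space over independent coordinates $(\sigma_1,\sigma_s,\text{splits},\text{pair samples})$. I would then argue that the conditioning on $\bt_1,\bt_u$ is a product restriction on the coordinates $\sigma_1,\sigma_s$ alone, so LLL still applies on the conditioned product space.
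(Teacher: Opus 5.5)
Your architecture matches the paper's: Type II by random-subset anti-concentration as in \cite[Lemma~6.3]{BK}, Type I via Lemma~\ref{splittinglemma} with the junctions settled by acceptability and permissibility, then the symmetric LLL with a polynomial dependency degree. Your observation that conditioning on acceptability is a product restriction on $(\sigma_1,\sigma_s)$ is a useful point the paper does not make. The gap is in the Type I case analysis.

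\textbf{The case split is wrong.} ``All four parts of some $D_i$ lie in $I$'' does not force $I$ to cover everything. For example, take the interval running from the start of $D_2^{(1)}$ to the end of $D_2^{(4)}$: its sum is not controlled by $\delta\neq 0$ or $\delta\notin -P\cup -N$, so those conditions are irrelevant there. What the paper uses is the structural property of \eqref{eq:splitandrearrange}: every proper nonempty interval $J\subseteq[u]$ of blocks contains some but not all of $D_\ell^{(1)},\dots,D_\ell^{(4)}$ for some $\ell$. The proof is short. If no $D_\ell$ has only some of its parts in $J$, the first block of $J$ must be some $D_\ell^{(1)}$. Then $D_\ell^{(4)}\in J$ forces $D_1^{(3)}\in J$, hence $D_1^{(1)}\in J$. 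So $J$ contains every first-half piece, hence every fourth piece, i.e.\ $J=[u]$. In the example above, $D_1$ (parts $\{3,4\}$) supplies the anti-concentration.

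The only leftover case is $J=[u]$, and there $\delta\neq 0$ is not enough. You need:
\begin{itemize}
\item the $t$-weak hypothesis on $\overline{\bp},\delta,\bn$ when no end block is truncated;
\item the $\delta+\IS(\bn)$ and $\delta+\IS(\bp)$ parts of acceptability when fewer than $K$ elements of $T_1$ or of $T_u$ are cut off;
\item the dissociativity of $D_1\cup D_s\cup\{\delta\}$ when both are cut.
\end{itemize}

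\textbf{The boundary pieces are not handled.} A Type I sum is not a sum of whole parts. It also contains up to two boundary pieces: fewer than $K$ elements of a block, or a block minus such a piece. These can come from the very $D_\ell$ you keep random. For instance, let $I$ consist of the last $k<K$ elements of $T_1$, all of $T_2$, and the first $k'<K$ elements of $T_3$; then $D_1$ is the only set with a whole part inside $I$. In that case, exposing ``everything else'' does not fix the target value, and Lemma~\ref{splittinglemma} does not apply as stated.

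The paper avoids this by writing the bad events with the sets $\sum_{\le K}(T_j)\subseteq\sum_{\le K}(D_\ell)$ and $\IS_t(\bp)$, $\IS_t(\bn)$. The events then depend only on the splittings, and a union bound over at most $\exp(O(K\log R))\cdot(t+1)=\exp(o(R))$ target values finishes the job. This is exactly where $K\log R=o(R)$ and $t=\exp(o(R))$ are used, and that step is missing from your sketch.

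\textbf{Smaller points.}
\begin{itemize}
\item In Type II, conditioning on the set of the $K$ pair-determined elements, together with the split, makes the extreme cases $|I\cap T_j|\in\{K,|T_j|-K\}$ deterministic. It also ignores other pieces of $D_i$ lying inside $I$. It is cleaner to compare with a uniformly random arrangement of all of $D_i$: the elements in any $m$ prescribed positions then form a uniform $m$-subset with $K\le m\le|D_i|-K$. You then pay a constant factor for permissibility.
\item Your dependency degree should be $O(t^2R+t^3)$ rather than $t^{O(1)}$. This is harmless, since $P=\exp(-\Theta(K\log R))$ still wins.
\end{itemize}
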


\begin{proof} 
The Type~II anti-concentration inequality follows exactly by the same argument used in~\cite[Lemma~6.3]{BK}.

\emph{Type I.} Let us consider the following set of events:

\begin{enumerate}[(i)]
    \item For each proper nonempty interval $[i,j] \subseteq [u]$, $|i-j| \leq t$,
    $$ 0 \in \left(\sum_{\leq K}(T_{i-1}) \cup -\sum_{\leq K}(T_i) \right)+\tau_i+\cdots+\tau_j+ \left(-\sum_{\leq K}(T_j) \cup \sum_{ \leq K}(T_{j+1}) \right)$$
    (with the convention that $T_0=T_{u+1}=\emptyset$);
    \item For each $1 \leq j \leq \min\{t, u-1\}$,
    $$0 \in \IS_{t}(\bp)+\tau_1+\cdots+\tau_j+ \left(-\sum_{\leq K}(T_j) \cup \sum_{ \leq K}(T_{j+1}) \right);$$
     and for each $\max\{2, u-t+1\} \leq j \leq u$,
    $$0 \in \IS_{t}(\bn)+\tau_u+\cdots+\tau_j+ \left(-\sum_{\leq K}(T_j) \cup \sum_{ \leq K}(T_{j-1}) \right).$$
\end{enumerate}
By the deterministic order \eqref{eq:splitandrearrange}, there exists $\ell$ such that each proper nonempty interval $J \subseteq [u]$ contains some but not all of $D_\ell^{(1)},\dots,D_\ell^{(4)}$; hence $\sum_{j\in J}\tau_j$ includes a subset-sum of $D_\ell$ of fixed size,
which is distributed uniformly among all such subset-sums. By dissociativity, all these sums are distinct, so each value is attained with probability at most $\binom{|D_\ell|}{|D_\ell|/4}^{-1}\le \exp(-\Theta(R))$ thanks to Lemma~\ref{splittinglemma}. Then since $|T_j| = \Theta(R)$ and $K=o(R)$ we have that $|\sum_{\leq K} (T_{j})| = \exp(o(R))$. Also note that $|\IS_t(\bp)|, |\IS_n(\bn)| \leq t = \exp(o(R))$, therefore we find that conditions $(i)$, $(ii)$ have probability of at most $\exp(-\Theta(R))$. This handles all Type I intervals except the ones starting in the last $K$ elements of $\mathbf{t}_{2j}$ and ending in the first $K$ elements of $\mathbf{t}_{2j+1}$ and the ones starting in $\overline{\mathbf{p}}$ and ending in the first $K$ elements of $\mathbf{t}_1$ or the ones starting last $K$ elements of $\mathbf{t}_u$ and ending in $\mathbf{n}$ but these lat two cases are avoided since we are conditioning on the event that both $\bt_1$ and $\bt_u$ are acceptable and the pairs $(\bt_{2j}, \bt_{2j+1})$ are permissible.

\emph{$t$-weak via LLL.} Consider bad events $E_I$ for intervals $I\subset[1,|A|]$, $|I|\le t$, defined by
$\sum_{i\in I} a_i=0$. By the previous two parts, $\mathbb{P}(E_I)\le P$ with
$P\le \exp(-\Theta(K\log R))$. We now bound the dependency degree in the Lov\'asz Local Lemma.

For any interval $I\subset [1,|A|]$ and block $T_i\subseteq D_{\ell}$, the bad event $E_I$ depends on
\begin{itemize}
\item the random ordering of $T_i$;
\item the random \emph{splitting variables} used to generate the four pieces
$D_\ell^{(1)},D_\ell^{(2)},D_\ell^{(3)},D_\ell^{(4)}$;
\item the random ordering of $T_{i'}\subseteq D_{\ell'}$ where $(T_i,T_{i'})$ are adjacent and conditioned by the permissibility condition;
\item the random \emph{splitting variables} used to generate the four pieces
$D_{\ell'}^{(1)},D_{\ell'}^{(2)},D_{\ell'}^{(3)},D_{\ell'}^{(4)}$.
\end{itemize}

Fix $E_I$. Since $|I|\le t$, the interval $I$ intersects at most $t$ blocks.
For each dissociated set $D_\ell$ touched by $I$, there are at most $8$ associated pieces, each of size $\Theta(R)$.
The number of intervals of length at most $t$ that intersect a fixed block of size $\Theta(R)$ is $O(tR+t^2)$.
Therefore the total number of bad events that can fail to be mutually independent from $E_I$
is at most
\[
D = O\big(t(tR+t^2)\big)=O(t^2R+t^3).
\]
Thus, by the symmetric Lov\'asz Local Lemma, it suffices to verify
\[
ePD \le 1.
\]
For $t\le \exp(cK)$ and $c>0$ sufficiently small, this holds since
$P\le \exp(-\Theta(K\log R))$ and $D=\exp(O(\log t+\log R))$.
Hence $\Pr\left(\cap_I \overline{E_I}\right)>0$.
\end{proof}

\subsection{Proof of Theorem \ref{thm:main2}}
We now complete the proof of Theorem~\ref{thm:main2}.
Let $A\subseteq \mathbb{Z}_k\setminus\{0\}$. Apply Theorem~\ref{prop:structure} to obtain a dilation $\lambda$
and a decomposition $\lambda A = P\cup N\cup (\cup_{j=1}^s D_j)$ with the stated properties.
By~\cite[Proposition~4.1]{BK} (see Lemma \ref{prop:ordering-P-N} of this paper), we can choose orderings
$\bp$ of $P$ and $\bn$ of $N$ such that $\overline{\bp},\delta,\bn$ is a sequencing.

Now set
\[
K := c_2 R^{1/2} \ =\ \Theta\big((\log p)^{1/4}\big),
\]
with $c_2>0$ sufficiently small, and apply Lemma~\ref{lem:oneshot}.
Part~(3) of Lemma~\ref{lem:oneshot} gives existence of a $t$-weak sequencing provided
$t\le \exp(cK)$ for a sufficiently small absolute constant $c>0$, which yields
$t \le \exp(c(\log p)^{1/4})$ after renaming constants.  Therefore, if $p$ is large enough (i.e. $p\geq \bar{p}$),  with positive probability, the sampled ordering contains no
nontrivial zero-sum interval, i.e.\ it is a $t$-weak sequencing. Moreover we can chose $c$ small enough so that $\exp\!\big(c(\log p)^{1/4}\big)<2$ for any prime $p<\bar{p}$. 
This proves the theorem.
\qed

\section{Improved classical sequenceability (Proof of Theorem \ref{thm:mainClassic})}
We explain how the one-shot control of Type~I and Type~II intervals yields the improved classical bound.
Here we work in the classical (non-$t$-weak) setting: we must avoid \emph{all} nontrivial intervals.

\begin{proof}[Proof of Theorem~\ref{thm:mainClassic}]
Let $A\subseteq \mathbb{Z}_k\setminus\{0\}$, and let us denote by $p$ the least prime divisor of $k$.
Apply the rectification/structure step (as in~\cite{BK}) with a choice of parameters that yields dissociated blocks of size
$$
 R=R(A,k):=c_1\max\left((\log p)^{1/2},\frac{\log p}{\log |A|}\right)\,.
$$
We recall that the Structure Theorem of \cite{BK} holds for this value of $R$ (the previous definition of $R$ was functional to the weak-sequenceability result).
Here we run the one-shot construction of Lemma~\ref{lem:oneshot} with
\[
K := c_2 R^{1/2}
\qquad (c_2>0\ \text{sufficiently small}).
\]

In the classical setting, we use a union bound over all nontrivial intervals $I\subset[1,|A|]$.
There are at most $|A|^2$ such intervals. For Type~II intervals, Lemma~\ref{lem:oneshot}(1) gives
\[
\mathbb{P}\Big(\sum_{i\in I}a_i=0\Big)\ \le\ \exp\!\big(-\Theta(K\log R)\big)
\ =\ \exp\!\big(-\Theta(R^{1/2}\log R)\big).
\]
Similarly for Type~I intervals, Lemma~\ref{lem:oneshot}(2) gives
\[
\mathbb{P}\Big(\sum_{i\in I}a_i=0\Big)\ \le\ \exp\!\big(-\Theta(R)\big).
\]
Hence
\[
\mathbb{P}(\exists\ \text{nontrivial }I:\ \sum_{i\in I}a_i=0)
\ \le\ |A|^2\exp\!\big(-\Theta(R^{1/2}\log R)\big) \ +\ |A|^2\exp\!\big(-\Theta(R)\big).
\]
Since $|A| \leq \exp(c (\log p)^{1/3})$ for $c>0$ sufficiently small then $R$ is at least $\Theta\big((\log p)^{2/3}\big)$, we obtain that the right-hand side is $o(1)$. Therefore, if $p$ is large enough (i.e. $p\geq \bar{p}$),  with positive probability, the sampled ordering contains no
nontrivial zero-sum interval, i.e.\ it is a sequencing. Moreover we can chose $c$ small enough so that $\exp\!\big(c(\log p)^{1/3}\big)<2$ for any prime $p<\bar{p}$. 
This proves the theorem.
\end{proof}

\end{document}